\def\R{\mathbb{R}}
\def\Rinf{\R\cup \{+\infty\}}
\def\cI{\mathcal{I}}
\def\cL{\mathcal{L}}
\def\cN{\mathcal{N}}
\def\cS{\mathcal{S}}
\def\cT{\mathcal{T}}
\def\a{\alpha}
\def\d{\delta}
\def\p{\partial}
\def\veps{\varepsilon}
\def\vphi{\varphi}
\def\O{\Omega}
\def\G{\Gamma}
\def\GD{{\Gamma_D}}
\def\GN{{\Gamma_N}}
\def\ty{\widetilde{y}}
\def\ou{\overline{u}}
\newcommand{\dv}[1]{\,{\mathrm d}#1}
\newcommand{\wcheck}[1]{#1\hspace{-.8ex}\mbox{\huge {\lower.45ex \hbox{$\textstyle \check{}$}}} \hspace{.5ex}}
\DeclareMathOperator{\diver}{div}
\DeclareMathOperator{\id}{id}
\let\oldmarginpar\marginpar
\renewcommand\marginpar[1]{
  \oldmarginpar[\raggedleft\footnotesize #1]
  {\raggedright\footnotesize #1}}
\newtheorem{definition}{Definition}
\newtheorem{proposition}[definition]{Proposition}
\newtheorem{theorem}[definition]{Theorem}
\newtheorem{corollary}[definition]{Corollary}
\newtheorem{remark}[definition]{Remark}
\newtheorem{examples}[definition]{Examples}
\numberwithin{definition}{section}
\def\RT{{\mathcal{R}T}}
\def\CR{{cr}}
\def\ou{\overline{u}}
\def\nablah{\nabla_{\! h}}
\def\SRT{\RT^0_{\!N}(\cT_h)}
\def\SRTF{\RT^0(\cT_h)}
\def\SCR{\cS^{1,\CR}_D(\cT_h)}
\def\SCRF{\cS^{1,\CR}(\cT_h)}
\def\SL0{\cL^0(\cT_h)}
\begin{document}
\title[Orthogonality of finite element spaces]{Orthogonality
relations of Crouzeix--Raviart and Raviart--Thomas finite element spaces}
\author[S. Bartels]{S\"oren Bartels}
\address{Abteilung f\"ur Angewandte Mathematik,  
Albert-Ludwigs-Universit\"at Freiburg, Hermann-Herder-Str.~10, 
79104 Freiburg i.~Br., Germany}
\email{bartels@mathematik.uni-freiburg.de}
\date{\today}
\author[Z. Wang]{Zhangxian Wang}
\address{Abteilung f\"ur Angewandte Mathematik,  
Albert-Ludwigs-Universit\"at Freiburg, Hermann-Herder-Str.~10, 
79104 Freiburg i.~Br., Germany}
\email{zhangxian.wang@mathematik.uni-freiburg.de}
\renewcommand{\subjclassname}{
\textup{2010} Mathematics Subject Classification}
\subjclass[2010]{65N12 65N30}
\begin{abstract}
Identities that relate projections of Raviart--Thomas finite element
vector fields to discrete gradients of Crouzeix--Raviart finite element 
functions are derived under general conditions. Various implications
such as discrete convex duality results and a characterization of
the image of the projection of the Crouzeix--Ravaiart space
onto elementwise constant functions are deduced.
\end{abstract}
  
\keywords{Finite elements, nonconforming methods, mixed methods}

\maketitle

\section{Introduction}\label{sec:intro}
Recent developments in the numerical analysis of total variation
regularized and related nonsmooth minimization problems show
that nonconforming and discontinuous finite element methods lead
to optimal convergence rates under suitable regularity 
conditions~\cite{ChaPoc19-pre,Bart20a-pre,Bart20b-pre}. 
This is in contrast to standard conforming methods
which often perform suboptimally~\cite{BaNoSa15}. A key 
ingredient in the derivation of quasi-optimal error estimates are 
discrete convex duality
results which exploit relations between Crouzeix--Raviart and
Raviart--Thomas finite element spaces introduced in~\cite{CroRav73}
and~\cite{RavTho77}. In particular, assume 
that $\O\subset \R^d$ is a bounded Lipschitz domain with a 
partitioning of the boundary into subsets $\GN,\GD\subset \p\O$, 
and let $\cT_h$ be a regular triangulation of $\O$. 
For a function $v_h\in \SCR$ and a vector field $y_h\in \SRT$
we then have the integration-by-parts formula 
\[
\int_\O \nablah v_h \cdot y_h \dv{x} = 
- \int_\O v_h \diver y_h \dv{x}.
\]
Important aspects here are that despite the possible discontinuity
of $v_h$ and $y_h$ no terms occur that are related
to interelement sides and that the vector
field $y_h$ and the function $v_h$ can be replaced by their 
elementwise averages on the
left- and right-hand side, respectively. In combination with 
Fenchel's inequality this implies a weak discrete duality relation. 

The validity of a strong discrete duality principle has been 
established in~\cite{ChaPoc19-pre,Bart20a-pre}
under certain differentiability or more generally approximability
properties of minimization problems using the orthogonality
relation
\begin{equation}\label{eq:rt_cr_ortho_a}
\big(\Pi_h \SRT\big)^\perp = \nablah \big(\ker \Pi_h |_{\SCR}\big),
\end{equation}
within the space of piecewise constant vector fields $\SL0^d$ 
equipped with the $L^2$ inner product and with $\nablah$ and
$\Pi_h$ denoting the elementwise application of the gradient
and orthogonal projection onto $\SL0^d$, respectively, $\ker$ denotes
the kernel of an operator.  
The identity implies that if a vector field $w_h\in \SL0^d$
satisfies
\[
\int_\O w_h \cdot \nablah v_h \dv{x} = 0
\]
for all $v_h \in \SCR$ with $\Pi_h v_h=0$ then there
exists a vector field $z_h \in \SRT$ such that
\[
w_h = \Pi_h z_h.
\]
Note that this is a stronger implication then the well known
result that if $w_h$ is orthogonal to discrete gradients
of {\em all} Crouzeix--Raviart functions then it belongs to
the Raviart--Thomas finite element space. 
Although strong duality is not required in the error analysis,
it reveals a compatibility property of discretizations and 
indicates optimality of estimates. Moreover, it is related
to postprocessing procedures that provide the solution 
of computationally expensive discretized dual problems 
via simple postprocessing procedures of numerical 
solutions of less expensive primal problems, 
cf.~\cite{Mari85,ArnBre85,CarLiu15,Bart20a-pre}. 

The proof of~\eqref{eq:rt_cr_ortho_a} given in~\cite{ChaPoc19-pre}
makes use of a discrete Poincar\'e lemma which is valid if
the Dirichlet boundary $\GD\subset \p\O$ is empty or
if $d=2$ and $\GD$ is connected. 
In this note we show that~\eqref{eq:rt_cr_ortho_a} can be 
established for general boundary partitions
by avoiding the use of the discrete Poincar\'e lemma. The
new proof is based on the surjectivity property of
the discrete divergence operator
\[
\diver : \SRT \to \SL0.
\]
This is a fundamental property for the use of the 
Raviart--Thomas method for discretizing saddle-point problems,
cf.~\cite{RavTho77,BoBrFo13-book}.
It is an elementary consequence of a projection property of
a quasi-interpolation operator $\cI_\RT:H^s(\O;\R^d)\to \SRT$
and the surjectivity of the divergence operator onto
the space $L^2(\O)$. 

Our arguments also provide a dual version of the orthogonality
relation~\eqref{eq:rt_cr_ortho_a} which states that
\begin{equation}\label{eq:rt_cr_ortho_b}
\diver \big(\ker \Pi_h |_{\SRT}\big) = \big(\Pi_h \SCR \big)^\perp. 
\end{equation}
Unless $\GD= \p\O$ we have that the left-hand side is trivial
and hence the identity yields that
\[
\Pi_h \SCR = \SL0,
\]
i.e., that the projection of Crouzeix--Raviart functions onto
elementwise constant functions is a surjection. If $\GD=\p\O$
then depending on the triangulation both equality or 
strict inclusion occur. This observation
reveals that the discretizations of total-variation regularized 
problems devised in~\cite{ChaPoc19-pre,Bart20a-pre} can be 
seen as discretizations
using elementwise constant functions with suitable nonconforming
discretizations of the total variation functional. 

The most important consequence of~\eqref{eq:rt_cr_ortho_b} is the
strong duality relation for the discrete primal problem defined
by minimizing the functional
\[
I_h(u_h) = \int_\O \phi(\nablah u_h) + \psi_h(x,\Pi_h u_h) \dv{x}
\]
in the space $\SCR$ and the discrete dual problem consisting in 
maximizing the functional
\[
D_h(z_h) = - \int_\O \phi^*(\Pi_h z_h) + \psi_h^*(x,\diver z_h) \dv{x}
\]
in the space $\SRT$. The functions $\phi$ and $\psi_h$ are suitable
convex functions with convex conjugates $\phi^*$  and $\psi_h^*$,
we refer the reader to~\cite{Bart20a-pre} for details. 

This article is organized as follows. In Section~\ref{sec:prelim} we
define the required finite element spaces along with certain
projection operators. Our main results are contained in 
Section~\ref{sec:ortho_rels}, where we prove the 
identities~\eqref{eq:rt_cr_ortho_a} and~\eqref{eq:rt_cr_ortho_b}
and deduce various corollaries. In the Appendix~\ref{sec:app_poincare}
we provide a proof of the discrete Poincar\'e lemma that leads to
an alternative proof of the main result under certain
restrictions.  

%\clearpage 
\section{Preliminaries}\label{sec:prelim}

\subsection{Triangulations}
Throughout what follows we let $(\cT_h)_{h>0}$ be a sequence of
regular triangulations of the bounded polyhedral Lipschitz domain 
$\O\subset \R^d$, cf.~\cite{BreSco08-book,Ciar78-book}. We let $P_k(T)$ denote
the set of polynomials of maximal total degree $k$ on $T\in \cT_h$ and
define the set of elementwise polynomial functions or 
vector fields
\[
\cL^k(\cT_h)^\ell = \{ w_h \in L^\infty(\O;\R^\ell): w_h|_T \in P_k(T) 
\text{ for all }T\in \cT_h\}.
\]
The parameter $h>0$ refers to the maximal mesh-size of the triangulation
$\cT_h$. The set of sides of elements is denoted by $\cS_h$. We let
$x_S$ and $x_T$ denote the midpoints (barycenters) of sides and elements, 
respectively. The $L^2$ projection onto piecewise constant functions or 
vector fields is denoted by
\[
\Pi_h : L^1(\O;\R^\ell) \to \SL0^\ell.
\]
For $v_h\in \cL^1(\cT_h)^\ell$ we have $\Pi_h v_h|_T = v_h(x_T)$ for all 
$T\in \cT_h$. We repeatedly use that $\Pi_h$ is self-adjoint, i.e., 
\[
(\Pi_h v,w) = (v,\Pi_h w)
\]
for all $v,w\in L^1(\O;\R^\ell)$ with the $L^2$ inner product $(\cdot,\cdot)$.

\subsection{Crouzeix--Raviart finite elements}
The Crouzeix--Raviart finite element space introduced in~\cite{CroRav73} 
consists of piecewise affine functions that are continuous at the midpoints of
sides of elements, i.e., 
\[
\SCRF = \{v_h \in \cL^1(\cT_h): v_h \text{ continuous in 
$x_S$ for all $S\in \cS_h$} \}.
\]
The elementwise application of the gradient operator
to a function $v_h\in \SCRF$ defines an elementwise
constant vector field $\nablah v_h$ via 
\[
\nablah v_h|_T = \nabla (v_h|_T)
\]
for all $T\in \cT_h$. For 
$v\in W^{1,1}(\O)$ we have $\nablah v = \nabla v$. 
Functions vanishing at midpoints
of boundary sides on $\GD$ are contained in 
\[
\SCR = \{v_h\in \SCRF: v_h(x_S)=0
\text{ for all $S\in \cS_h$ with $S\subset \GD$}\}.
\]
A basis of the 
space $\SCRF$ is given by the functions 
$\vphi_S \in \SCRF$, $S\in \cS_h$, satisfying the Kronecker property
\[
\vphi_S(x_{S'}) = \d_{S,S'}
\]
for all $S,S'\in \cS_h$. The function $\vphi_S$ vanishes on elements that
do not contain the side $S$ and is continuous with value~1 along $S$. A
quasi-interpolation operator is for $v\in W^{1,1}(\O)$ defined via
\[
\cI_\CR v = \sum_{S\in \cS_h} v_S \vphi_S, \quad v_S = |S|^{-1} \int_S v \dv{s},
\]
We have that $\cI_\CR$ preserves averages of gradients, i.e., 
\[
\nablah \cI_\CR v = \Pi_h \nabla v,
\]
which follows from an integration by parts, cf.~\cite{BoBrFo13-book,Bart16-book}.

% , we have the estimates 
% \[
% \|v-\cI_\CR v \|_{L^p(\O)} \le  c_\CR h \|\nabla v\|_{L^p(\O)}, 
% \quad \|\nablah \cI_\CR v \|_{L^p(\O)} \le \|\nabla v \|_{L^p(\O)}
% \]
% for all $v\in W^{1,p}(\O)$, $1\le p\le \infty$. Moreover, we have 
% $\|\cI_\CR v \|_{L^\infty(\O)} \le c_d \|v\|_{L^\infty(\O)}$  
% with $c_d = (d-1)(d+1)$. For $v\in W^{2,p}(\O)$ with $1\le p \le \infty$ we
% also have the interpolation estimates \marginpar{needed?}
% \[
% \|v-\cI_\CR v\|_{L^p(\O)} + h \|\nablah \cI_\CR v - \nabla v\|_{L^p(\O)} 
% \le c_\CR' h^2 \|D^2 v\|_{L^p(\O)}.
% \]
 
\subsection{Raviart--Thomas finite elements}
The Raviart--Thomas finite element space of~\cite{RavTho77} 
is defined as
\[\begin{split}
\SRTF  = \{y_h\in & H(\diver;\O): y_h|_T(x) = a_T + b_T (x-x_T), \\
&  a_T\in \R^d, \, b_T\in \R \text{ for all $T\in\cT_h$} \},
\end{split}\]
where $H(\diver;\O)$ is the set of $L^2$ vector fields whose distributional
divergence belongs to $L^2(\O)$.
Vector fields in $\SRTF$ have continuous constant normal components
on element sides. The subset of vector fields with
vanishing normal component on the Neumann boundary $\GN$ is defined as
\[
\SRT = \{ y_h\in \SRTF: y_h \cdot n = 0 \text{ on $\GN$}\},
\]
where $n$ is the outer unit normal on $\p\O$. A basis of the
space $\SRTF$ is given by vector fields $\psi_S$ associated with
sides $S\in \cS_h$. Each vector field $\psi_S$ is 
supported on adjacent elements $T_\pm \in \cT_h$ with
\begin{equation}\label{eq:def_rt_basis}
\psi_S(x) = \pm \frac{|S|}{d! |T_\pm|} (z_{S,T_\pm} - x)
\end{equation}
for $x\in T_\pm$ with opposite vertex $z_{S,T_\pm}$ to $S\subset \p T_\pm$.
We have the Kronecker property 
\[
\psi_S|_{S'} \cdot n_{S'} = \d_{S,S'}
\] 
for all sides $S'$ with unit normal vector $n_{S'}$, if $S'=S$ we assume
that $n_S$ points from $T_-$ into $T_+$. A quasi-interpolation operator is 
for vector fields $z\in W^{1,1}(\O;\R^d)$ given by 
\[
\cI_\RT z = \sum_{S\in\cS_h} z_S \psi_S, \quad 
z_S = |S|^{-1} \int_S z \cdot n_S \dv{s}.
\]
For the operator $\cI_\RT$ we have the projection property
\[
\diver \cI_\RT z = \Pi_h \diver z,
\]
which is a consequence
of an integration by parts, cf.~\cite{BoBrFo13-book,Bart16-book}. 
This identity implies that the divergence 
operator defines a surjection
from $\SRT$ into $\SL0$, provided that constants
are eliminated from $\SL0$ if $\GD = \emptyset$. 

\subsection{Integration by parts}
An elementwise integration by parts implies that for $v_h\in \SCRF$
and $y_h\in \SRTF$ we have the integration-by-parts formula
\begin{equation}\label{eq:int_parts_rt_cr}
\int_\O \nabla_h v_h \cdot y_h \dv{x} + \int_\O v_h \diver y_h \dv{x} 
= \int_{\p\O}  v_h \, y_h \cdot n \dv{s}.
\end{equation}
Here we used that $y_h$ has continuous constant normal components on 
inner element sides and that jumps of $v_h$ have vanishing integral mean. If
an elementwise constant vector field $w_h\in \SL0^d$ satisfies
\[
\int_\O w_h \cdot \nablah v_h \dv{x} = 0
\]
for all $v_h\in \SCR$ then by choosing $v_h = \vphi_S$ for $S\in \cS_h\setminus \GD$
one finds that its normal components are 
continuous on inner element sides and vanish on the $\GN$, so that 
$w_h \in \SRT$.  We thus have the decomposition 
\[
\SL0^d = \ker (\diver|_{\SRT})  \oplus \nablah \SCR,
\]
where we used that $\ker (\diver|_{\SRT}) = \SL0^d\cap \SRT$.

%\cleardoublepage
\section{Orthogonality relations}\label{sec:ortho_rels}

The following identities and in particular their proofs and 
corollaries are the main contributions of this article. 

\begin{theorem}[Orthogonality relations]\label{thm:ortho_rels}
Within the sets of elementwise constant vector fields and functions 
$\cL^0(\cT_h)^\ell$ equipped with the $L^2$ inner product we have
\[\begin{split}
\big(\Pi_h \SRT\big)^\perp &= \nablah \big(\ker \Pi_h|_{\SCR} \big), \\
\diver \big(\ker \Pi_h |_{\SRT} \big) &= \big(\Pi_h \SCR \big)^\perp.
\end{split}\]
\end{theorem}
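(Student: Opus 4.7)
My plan is to treat each identity by proving two set inclusions, relying only on (i) the integration-by-parts identity~\eqref{eq:int_parts_rt_cr} restricted to $\SCR\times\SRT$, (ii) the self-adjointness of $\Pi_h$ together with $\nablah\SCRF\subset\SL0^d$ and $\diver\SRTF\subset\SL0$, (iii) the orthogonal decomposition $\SL0^d=\ker(\diver|_{\SRT})\oplus\nablah\SCR$ established at the end of the Preliminaries, and (iv) the surjectivity of $\diver\colon\SRT\to\SL0$ (modulo constants when $\GD=\emptyset$). Since both sides of both identities live in finite-dimensional inner-product subspaces of $\SL0^d$ respectively $\SL0$, only set inclusions need to be checked.

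For the first identity, the inclusion $\nablah(\ker\Pi_h|_{\SCR})\subset(\Pi_h\SRT)^\perp$ is immediate: for $v_h\in\SCR$ with $\Pi_h v_h=0$ and $z_h\in\SRT$, \eqref{eq:int_parts_rt_cr} together with self-adjointness of $\Pi_h$ gives $(\nablah v_h,\Pi_h z_h)=(\nablah v_h,z_h)=-(v_h,\diver z_h)=-(\Pi_h v_h,\diver z_h)=0$. For the reverse inclusion I take $w_h\in(\Pi_h\SRT)^\perp\cap\SL0^d$ and decompose $w_h=y_h+\nablah v_h$ by~(iii). Since $y_h\in\SL0^d\cap\SRT=\ker(\diver|_{\SRT})$ we have $\Pi_h y_h=y_h$, so $0=(w_h,\Pi_h y_h)=(w_h,y_h)=\|y_h\|^2$ by orthogonality of the decomposition, whence $y_h=0$ and $w_h=\nablah v_h$. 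Testing again with arbitrary $z_h\in\SRT$ using~(i)--(ii) yields $(\Pi_h v_h,\diver z_h)=-(w_h,\Pi_h z_h)=0$; by~(iv) this forces $\Pi_h v_h=0$, except when $\GD=\emptyset$ in which case $\Pi_h v_h$ is a global constant that may be subtracted from $v_h$ since constants lie in $\SCR=\SCRF$.

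For the second identity, the inclusion $\diver(\ker\Pi_h|_{\SRT})\subset(\Pi_h\SCR)^\perp$ follows symmetrically: for $z_h\in\SRT$ with $\Pi_h z_h=0$ and $v_h\in\SCR$, $(\diver z_h,\Pi_h v_h)=(\diver z_h,v_h)=-(\nablah v_h,z_h)=-(\nablah v_h,\Pi_h z_h)=0$. For the reverse inclusion, given $q_h\in(\Pi_h\SCR)^\perp\subset\SL0$, I first note that when $\GD=\emptyset$ the inclusion $\R\subset\SCR$ yields $(q_h,1)=0$, which is exactly the compatibility condition required by~(iv); so in every case there exists $\tilde z_h\in\SRT$ with $\diver\tilde z_h=q_h$. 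Now decompose $\Pi_h\tilde z_h=r_h+\nablah v_h$ by~(iii). Orthogonality together with the same integration-by-parts chain give $\|\nablah v_h\|^2=(\Pi_h\tilde z_h,\nablah v_h)=(\tilde z_h,\nablah v_h)=-(q_h,v_h)=-(q_h,\Pi_h v_h)=0$, so $\nablah v_h=0$ and $\Pi_h\tilde z_h=r_h$. Setting $z_h:=\tilde z_h-r_h$, I obtain $z_h\in\SRT$ with $\Pi_h z_h=0$ and $\diver z_h=q_h$, since $r_h\in\ker(\diver|_{\SRT})$.

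The main point requiring care is the pure Neumann case $\GD=\emptyset$, in which $\diver|_{\SRT}$ is surjective only onto the mean-zero part of $\SL0$ and $\SCR$ contains constants; in each step this is precisely the compatibility condition that arises, so the bookkeeping closes without invoking any deeper tool such as the discrete Poincar\'e lemma.
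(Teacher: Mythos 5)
Your proof is correct, and the two halves sit differently relative to the paper. For the second identity your argument is essentially the paper's: produce $\tilde z_h\in\SRT$ with $\diver\tilde z_h=q_h$ (the mean-zero compatibility when $\GD=\emptyset$ is exactly the point you make explicit, which the paper leaves implicit), show $\Pi_h\tilde z_h$ is orthogonal to $\nablah\SCR$, and subtract the elementwise constant part. For the first identity you take a genuinely different, dual route. The paper proves the complementary inclusion $\big[\nablah(\ker\Pi_h|_{\SCR})\big]^\perp\subset\Pi_h\SRT$ constructively: it solves an auxiliary discrete problem for $r_h\in\big(\ker\Pi_h|_{\SCR}\big)^\perp$, picks $z_h$ with $-\diver z_h=\Pi_h r_h$, and builds an explicit lift $\ty_h|_T=y_h|_T+\diver z_h|_T\,(x-x_T)/d$ whose projection is $y_h$. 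You instead attack $(\Pi_h\SRT)^\perp\subset\nablah(\ker\Pi_h|_{\SCR})$ head-on: split $w_h=y_h+\nablah v_h$ via the decomposition $\SL0^d=\ker(\diver|_{\SRT})\oplus\nablah\SCR$, kill $y_h$ by testing against $\Pi_h y_h=y_h$, and then use surjectivity of $\diver:\SRT\to\SL0$ to force $\Pi_h v_h=0$ (up to a subtractable constant when $\GD=\emptyset$). Both arguments rest on the same two ingredients (the decomposition from the Preliminaries and the surjectivity of the discrete divergence), but yours is shorter and avoids the auxiliary variational problem, while the paper's yields an explicit Raviart--Thomas preimage under $\Pi_h$, which is useful for the postprocessing results later on. One small point you should state rather than assume: the splitting $\SL0^d=\ker(\diver|_{\SRT})\oplus\nablah\SCR$ is an \emph{orthogonal} decomposition --- this follows from the Preliminaries, since $\ker(\diver|_{\SRT})=\SL0^d\cap\SRT$ is shown there to coincide with $(\nablah\SCR)^\perp$ inside $\SL0^d$ --- as your cancellations $(\nablah v_h,y_h)=0$ and $(r_h,\nablah v_h)=0$ depend on it.
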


\begin{proof}
(i) The integration-by-parts formula~\eqref{eq:int_parts_rt_cr} implies
\[
(\nablah v_h,\Pi_h y_h) = - (v_h, \diver y_h) = - (\Pi_h v_h,\diver y_h) = 0
\]
if $\Pi_h v_h =0$ and hence 
$\Pi_h \SRT \subset \big[\nablah \big(\ker \Pi_h|_{\SCR} \big)\big]^\perp$.
To prove the converse inclusion let $y_h \in \SL0^d$ be orthogonal to 
$\nablah \big(\ker \Pi_h|_{\SCR} \big)$. We show that there exists
$\ty_h\in \SRT$ with $\Pi_h \ty_h = y_h$. For this, let 
$Z_h = \big(\ker \Pi_h|_{\SCR} \big)^\perp \subset \SCR$ 
and $r_h \in Z_h$ be the uniquely defined function with
\[
(\Pi_h r_h,\Pi_h v_h) = (y_h,\nablah v_h)
\]
for all $v_h\in Z_h$. The identity holds for all $v_h\in \SCR$ since 
$y_h$ is orthogonal to discrete gradients of functions $v_h \in \SCR$ 
with $\Pi_h v_h = 0$. In particular, $\Pi_h r_h$ is orthogonal to constant
functions if $\GD = \emptyset$.  We choose $z_h \in \SRT$ with 
$-\diver z_h = \Pi_h r_h$ and verify that
\[
(y_h - z_h,\nablah v_h) = (\Pi_h r_h,\Pi_h v_h) + (\diver z_h,v_h)  = 0
\]
for all $v_h \in \SCR$. We next define 
$\ty_h|_T = y_h|_T + \diver z_h|_T (x-x_T)/d$ for all $T\in \cT_h$ and 
note that
\[
(\ty_h - z_h,\nablah v_h) = (y_h-z_h,\nablah v_h) = 0
\]
for all $v_h\in \SCR$. Since $\ty_h -z_h$ is elementwise constant, it
follows that $\ty_h-z_h\in \SRT$ and in particular $\ty_h \in \SRT$. 
By definition of $\ty_h$ we have $\Pi_h \ty_h = y_h$ which proves the
first asserted identity. \\
(ii) For the second statement we first note that if $\Pi_h y_h = 0$ 
for $y_h\in \SRT$ then 
\[
(\Pi_h v_h,\diver y_h) = (v_h, \diver y_h) = - (\nablah v_h,\Pi_h y_h) = 0
\]
for all $v_h\in \SCR$ and hence 
$\diver y_h \in \big(\Pi_h \cS^{1,\CR}_D(\cT_h) \big)^\perp$. It remains to show that
\[
\big(\Pi_h \cS^{1,\CR}_D(\cT_h) \big)^\perp 
\subset \diver \big(\ker \Pi_h |_{\RT^0_N(\cT_h)}\big).
\]
If $w_h \in \SL0$ is orthogonal to $\Pi_h \cS^{1,\CR}_D(\cT_h)$
we choose $z_h \in \SRT$ with $\diver z_h = w_h$ and note that
\[
(\Pi_h z_h ,\nablah v_h) = (z_h,\nablah v_h) = -(w_h, v_h) = -(w_h,\Pi_h v_h) = 0
\]
for all $v_h\in \SCR$. This implies that $\Pi_h z_h\in \SRT$ and hence also
$y_h = z_h -\Pi_h z_h\in \SRT$. Since $\Pi_h y_h = 0$ and 
$\diver y_h = w_h$ we deduce the second identity. 
\end{proof}

An implication is a surjectivity property of the mapping 
$\Pi_h:\SCR\to \SL0$ if $\GD\neq \p\O$. 

\begin{corollary}[Surjectivity]\label{cor:surject_proj_cr}
If $\GD \neq \p\O$ then we have 
\[
\Pi_h \SCR = \SL0.
\]
Otherwise, the subspace $\Pi_h \SCR \subset \SL0$ 
has codimension at most one. 
\end{corollary}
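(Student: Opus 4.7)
The plan is to apply the second identity of Theorem~\ref{thm:ortho_rels}, which identifies $(\Pi_h \SCR)^\perp$ with $\diver(\ker \Pi_h|_{\SRT})$; bounding the dimension of the latter then bounds the codimension of $\Pi_h \SCR$ in $\SL0$.

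First I would parametrize $\ker \Pi_h|_{\SRT}$ explicitly. For $y_h \in \SRT$ the elementwise representation reads $y_h|_T(x) = a_T + b_T(x - x_T)$, whose average over $T$ equals $a_T$. Hence $\Pi_h y_h = 0$ is equivalent to $a_T = 0$ for every $T$, leaving $y_h|_T = b_T(x - x_T)$ with $\diver y_h|_T = d\, b_T$. Since $(x - x_T)\cdot n_S$ is constant on every side $S\subset \p T$, with absolute value equal to the height $h_{T,S}>0$ of $T$ at $S$, the continuity of $y_h\cdot n_S$ across an inner side $S$ shared by $T_\pm$ translates to $b_{T_+} h_{T_+,S} + b_{T_-} h_{T_-,S} = 0$, while the condition $y_h\cdot n =0$ on a side $S\subset \GN$ of an element $T$ forces $b_T = 0$.

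Suppose that $\GD \neq \p\O$, so that at least one side lies in $\GN$. Then $b_T = 0$ on the element $T$ containing that side, and the inner-side relation implies $b_{T'} = 0$ whenever $b_T = 0$ and $T,T'$ are adjacent across an inner side. Since $\cT_h$ is connected, induction along chains of neighbouring elements yields $b_T = 0$ on every element. Consequently $\diver(\ker \Pi_h|_{\SRT}) = \{0\}$, and Theorem~\ref{thm:ortho_rels} gives $(\Pi_h \SCR)^\perp = \{0\}$, i.e., $\Pi_h \SCR = \SL0$.

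If instead $\GD = \p\O$, no Neumann forcing is present, but the inner-side relation still determines $b_{T'}$ uniquely from the value on any adjacent $T$, so the entire coefficient vector $(b_T)_T$ is prescribed by its value on one distinguished element. Therefore $\dim \ker \Pi_h|_{\SRT} \le 1$, and hence $\dim (\Pi_h \SCR)^\perp \le 1$. The main subtlety is that compatibility of the inner-side propagation along cycles of the dual graph determines whether this dimension equals $1$ or $0$, which accounts for codimension either strictly one or zero; but for the asserted upper bound only connectedness of $\cT_h$ is needed.
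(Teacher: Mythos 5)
Your argument is correct. For the case $\GD\neq\p\O$ it coincides in substance with the paper's: both reduce, via the second identity of Theorem~\ref{thm:ortho_rels}, to showing $\ker\Pi_h|_{\SRT}=\{0\}$, and both do so by an inductive sweep over neighbouring elements starting from an element with a side on $\GN$; you phrase the local step through the coefficients $b_T$ and the normal-continuity relation $b_{T_+}h_{T_+,S}+b_{T_-}h_{T_-,S}=0$, while the paper expands $y_h|_T$ in the Raviart--Thomas side basis and uses the linear independence of the vectors $x_T-z_{S_j}$ --- the same computation in different coordinates. (Minor point: the constant value of $(x-x_T)\cdot n_S$ on $S$ is $h_{T,S}/(d+1)$ rather than $h_{T,S}$ if $h_{T,S}$ denotes the height from the opposite vertex, but only its positivity enters the argument.) Where you genuinely diverge is the case $\GD=\p\O$: the paper removes one side $S$ from $\GD$, applies the first part to the enlarged space $\cS^{1,cr}_{D'}(\cT_h)$ with $\GD'=\GD\setminus S$ to get $\Pi_h\SCR\subset\Pi_h\cS^{1,cr}_{D'}(\cT_h)=\SL0$, and observes that the gap is governed by whether $\Pi_h\vphi_S$ lies in the smaller image; you instead bound $\dim\ker\Pi_h|_{\SRT}\le 1$ directly, noting that the propagation relation determines all coefficients $b_T$ from the value on a single element (by connectedness of the element adjacency graph) and that $\diver$ carries this kernel onto $(\Pi_h\SCR)^\perp$, with the cycle-compatibility caveat correctly identified as irrelevant for an upper bound. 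Your route is more uniform with the first case and makes the codimension bound transparent; the paper's route has the side benefit of pinpointing the potentially missing one-dimensional piece as the class of $\Pi_h\vphi_S$, which is what the subsequent examples exploit.
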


\def\tSCR{\cS^{1,cr}_{D'}(\cT_h)}

\begin{proof}
(i) From Theorem~\ref{thm:ortho_rels} we deduce that the asserted
identity holds if and only if $\diver \ker \Pi_h|_{\SRT} = \{0\}$. 
Since 
\[
\ker \Pi_h|_{\SRT} = 
\{y_h\in \SRT: y_h|_T = b_T (x-x_T) \text{ f.a. } T\in \cT_h\},
\]
the latter condition is equivalent
to $\ker \Pi_h|_{\SRT} = \{0\}$. Let $T\in \cT_h$ such that 
a side $S_0 \subset \p T$ belongs to $\GN$,
i.e., we have $y_h|_T(x) = \sum_{j=0}^d \a_j (x-z_{S_j})$, where $z_{S_j}$ 
is the vertex of $T$ opposite to the side $S_j\subset \p T$, and with $\a_0 = 0$. 
If $y_h(x_T) =0$ then it follows that $\a_j = 0$ for $j=1,\dots,d$ since the
vectors $x_T-z_{S_j}$ are linearly independent. Starting from this element
we may successively consider neighboring elements to
deduce that $y_h|_T=0$ for all $T\in \cT_h$. \\
(ii) If $\GD =\p\O$ we may argue as in~(i) by removing one side $S\in \cS_h\cap \GD$
from $\GD$, define $\GD'= \GD\setminus S$, and using the larger space $\tSCR$.
We then have $\Pi_h \SCR  \subset \Pi_h \tSCR = \SL0$. The difference
is trivial if and only if $\Pi_h \vphi_S$ belongs to $\Pi_h \tSCR$.
\end{proof}

The following examples show that both equality or strict inequality can
occur if $\GD=\p\O$. 

\begin{examples}
(i) Let $L\in \{1,2\}$, $\cT_h = \{T_1,\dots,T_L\}$, $\overline{\O}=T_1\cup \dots \cup T_L$,
$\GD = \p\O$. Then  $\Pi_h \SCR \simeq \R^{L-1}$ while $\SL0 \simeq \R^L$. \\
(ii) Let $\cT_h = \{T_1,T_2,T_3\}$ be a triangulation consisting of the
subtriangles obtained by connecting the vertices of a macro triangle $T$ 
with its midpoint $x_T$. Let $\overline{\O}=T_1\cup T_2 \cup T_3$ and 
$\GD = \p\O$. We then have $\Pi_h \SCR =\SL0$. 
\end{examples}

The second implication concerns discrete versions of convex duality relations. 
We let 
\[
\phi^*(s) = \sup_{r\in \R^\ell} s\cdot r - \phi(r)
\]
be the convex conjugate of a given convex function $\phi \in C(\R^d)$. 

\begin{corollary}[Convex conjugation]\label{cor:duality}
Let $\ou_h\in \Pi_h \SCR$ and $\phi\in C(\R^d)$ be convex. We then have
\[\begin{split}
\inf & \Big\{ \int_\O \phi(\nablah u_h) \dv{x}: u_h \in \SCR, \, \Pi_h u_h = \ou_h \Big\} \\
& \ge  \sup \Big\{ - \int_\O \phi^*(\Pi_h z_h) \dv{x} - (\ou_h,\diver z_h): z_h \in \SRT\Big\}.
\end{split}\]
If $\phi \in C^1(\R^d)$ and the infimum is finite then equality holds. 
\end{corollary}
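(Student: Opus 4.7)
The plan is to obtain the inequality from Fenchel's inequality combined with the integration-by-parts formula~\eqref{eq:int_parts_rt_cr}. For admissible $u_h\in \SCR$ with $\Pi_h u_h = \ou_h$ and arbitrary $z_h\in \SRT$, start from the pointwise bound $\phi(\nablah u_h) + \phi^*(\Pi_h z_h) \ge \nablah u_h \cdot \Pi_h z_h$ and integrate over $\O$. Since $\nablah u_h$ is elementwise constant, self-adjointness of $\Pi_h$ rewrites the cross term as $(\nablah u_h, z_h)$; formula~\eqref{eq:int_parts_rt_cr} then turns it into $-(u_h,\diver z_h)$, where the boundary contribution vanishes because $\int_S u_h \dv{s} = 0$ for every $S\subset \GD$ (the value at the midpoint equals the mean for an affine function) and $z_h\cdot n = 0$ on $\GN$; a final application of self-adjointness, using $\diver z_h\in \SL0$, converts this into $-(\ou_h,\diver z_h)$. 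Passing to the infimum on the left and to the supremum on the right yields the weak duality inequality.

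For the equality under $\phi\in C^1(\R^d)$ and finite infimum, I would first secure a primal minimizer $u_h^\star$ on the finite-dimensional affine set $\{u_h\in \SCR:\Pi_h u_h = \ou_h\}$. The first-order stationarity condition along admissible perturbations $v_h\in\ker\Pi_h|_{\SCR}$ then reads
\[
\int_\O D\phi(\nablah u_h^\star)\cdot \nablah v_h\dv{x} = 0,
\]
which is exactly the statement that $D\phi(\nablah u_h^\star)\in \SL0^d$ is orthogonal to $\nablah(\ker\Pi_h|_{\SCR})$. By Theorem~\ref{thm:ortho_rels} this forces $D\phi(\nablah u_h^\star)\in \Pi_h\SRT$, so there exists $z_h^\star\in\SRT$ with $\Pi_h z_h^\star = D\phi(\nablah u_h^\star)$. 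Invoking the Fenchel--Young equality $\phi^*(D\phi(r)) = r\cdot D\phi(r) - \phi(r)$ pointwise with $r=\nablah u_h^\star$, integrating, and reusing the integration-by-parts manipulation from the first paragraph yields
\[
-\int_\O \phi^*(\Pi_h z_h^\star)\dv{x} - (\ou_h,\diver z_h^\star) = \int_\O \phi(\nablah u_h^\star)\dv{x},
\]
so $z_h^\star$ attains the dual supremum and equality holds.

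The main obstacle is the existence of the primal minimizer: without a growth hypothesis on $\phi$, a finite infimum on a finite-dimensional affine subspace need not be attained (for example, if $\phi$ is merely bounded). I would exploit the injectivity of $\nablah$ on $\ker \Pi_h|_{\SCR}$ (any $v_h$ in this kernel with $\nablah v_h=0$ is elementwise constant and therefore equals $\Pi_h v_h = 0$) to recast the primal objective as a convex function on an affine translate of the finite-dimensional space $\nablah(\ker\Pi_h|_{\SCR})$. From there, either a standard coercivity argument applies whenever $\phi$ has recession behaviour compatible with the affine constraint, or one can regularise $\phi$ by $\phi + \veps|\cdot|^2$, apply strong duality for the coercive perturbation, and pass $\veps\to 0$ using finite-dimensionality of $\SL0^d$ and $\SRT$ and lower semicontinuity of $\phi^*$. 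Once this attainment step is settled, everything else is algebraic bookkeeping around Theorem~\ref{thm:ortho_rels}.
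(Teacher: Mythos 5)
Your argument matches the paper's proof: weak duality follows from Fenchel's inequality combined with the integration-by-parts formula~\eqref{eq:int_parts_rt_cr}, and equality is obtained from the first-order optimality condition for a minimizer, Theorem~\ref{thm:ortho_rels} (which identifies $\phi'(\nablah u_h^\star)$ as $\Pi_h z_h^\star$ for some $z_h^\star\in\SRT$), and the Fenchel--Young equality. The only difference is that you explicitly flag attainment of the finite infimum as an issue, a point the paper passes over by simply taking an optimal $u_h$; the concern is legitimate and your sketched remedies are plausible, but the core route is identical.
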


\begin{proof}
An integration by parts and Fenchel's inequality show that
\begin{equation}\label{eq:fenchel_by_parts}
- (\Pi_h u_h,\diver z_h) =  (\nablah u_h, \Pi_h z_h) \le \phi(\nablah u_h) + \phi^*(\Pi_h z_h).
\end{equation}
This implies that the left-hand side is an upper bound for the right-hand
side. If $\phi$ is differentiable $u_h \in \SCR$ is optimal in the infimum
then we have the optimality condition
\[
\int_\O \phi'(\nablah u_h) \cdot \nablah v_h \dv{x} = 0
\]
for all $v_h\in \SCR$ with $\Pi_h v_h = 0$. Theorem~\ref{thm:ortho_rels}
yields that 
$\phi'(\nablah u_h) = \Pi_h z_h$ for some $z_h\in \SRT$. This identity implies 
equality in~\eqref{eq:fenchel_by_parts} and hence 
\[
\int_\O \phi(\nablah u_h) \dv{x} 
= -\int_\O \phi^*(\Pi_h z_h) \dv{x} - (\ou_h,\diver z_h)
\]
so that the asserted equality follows.
\end{proof}

\begin{remark}
For nondifferentiable functions $\phi$, the strong duality relation 
can be established if there exists a sequence of continuously differentiable
functions $\phi_\veps$ such that the corresponding discrete primal 
and dual problems $I_{h,\veps}$ and $D_{h,\veps}$ are $\G$-convergent 
to $I_h$ and $D_h$ as $\veps\to 0$, respectively. An example is the 
approximation of $\phi(s)= |s|$ by functions 
$\phi_\veps(s) = \min\{|s|-\veps/2,|s|^2/(2\veps)\}$ for $\veps>0$.
\end{remark}
 
With the conjugation formula we obtain a canonical definition
of a discrete dual variational problem. 

\begin{corollary}[Discrete duality]\label{cor:discr_dual}
Assume that $\phi \in C(\R^d)$ is convex and
$\psi_h:\O\times \R \to \Rinf$ is elementwise constant in the first argument
and convex with respect to the second argument. For 
$u_h\in \SCR$ and $z_h \in \SRT$ define 
\[\begin{split}
I_h(u_h) &= \int_\O \phi(\nablah u_h) + \psi_h(x,\Pi_h u_h) \dv{x}, \\
D_h(z_h) &= - \int_\O \phi^*(\Pi_h z_h) + \psi_h^*(x,\diver z_h) \dv{x}.
\end{split}\]
We then have 
\[
\inf_{u_h\in \SCR} I_h(u_h) \ge \sup_{z_h\in \SRT} D_h(z_h).
\]
\end{corollary}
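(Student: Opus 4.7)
The plan is to establish this weak duality inequality by combining two applications of Fenchel's inequality with the integration-by-parts formula \eqref{eq:int_parts_rt_cr}, essentially following the pattern already used in the proof of Corollary~\ref{cor:duality} but applied to both summands of the integrand.

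First, for arbitrary $u_h \in \SCR$ and $z_h \in \SRT$, I would invoke Fenchel's inequality pointwise for each of the two convex pairs: for almost every $x \in \O$ we have
\[
\phi(\nablah u_h) + \phi^*(\Pi_h z_h) \ge \nablah u_h \cdot \Pi_h z_h,
\]
\[
\psi_h(x, \Pi_h u_h) + \psi_h^*(x, \diver z_h) \ge \Pi_h u_h \cdot \diver z_h.
\]
Here the elementwise constancy of $\psi_h$ in the first argument is what makes the second Fenchel inequality well defined for the projected arguments $\Pi_h u_h$ and $\diver z_h$ (both of which are elementwise constant). Integrating and rearranging yields
\[
I_h(u_h) - D_h(z_h) \ge \int_\O \nablah u_h \cdot \Pi_h z_h \dv{x} + \int_\O \Pi_h u_h \cdot \diver z_h \dv{x}.
\]

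Next, I would simplify both terms on the right using the self-adjointness of $\Pi_h$ together with the fact that $\nablah u_h \in \SL0^d$ and $\diver z_h \in \SL0$ are already piecewise constant, so the projections can be dropped:
\[
(\nablah u_h, \Pi_h z_h) + (\Pi_h u_h, \diver z_h) = (\nablah u_h, z_h) + (u_h, \diver z_h).
\]
Applying the integration-by-parts formula \eqref{eq:int_parts_rt_cr} shows that this sum equals the boundary integral $\int_{\p\O} u_h \, (z_h \cdot n) \dv{s}$.

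Finally, I would verify that this boundary term vanishes. On each side $S \subset \p\O$, the normal component $z_h \cdot n$ is constant, so $\int_S u_h \, (z_h \cdot n) \dv{s} = (z_h \cdot n)|_S \, |S| \, u_h(x_S)$ since $u_h|_S$ is affine and therefore has integral mean $u_h(x_S)$. If $S \subset \GD$, then $u_h(x_S) = 0$ by definition of $\SCR$, and if $S \subset \GN$, then $z_h \cdot n|_S = 0$ by definition of $\SRT$. In either case the contribution is zero, so the right-hand side of the Fenchel estimate is nonnegative and $I_h(u_h) \ge D_h(z_h)$. Taking the infimum over $u_h$ and the supremum over $z_h$ gives the claim. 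There is no serious obstacle here; the only point requiring care is ensuring that the projections in the cross terms can be absorbed before integration by parts is applied, which is exactly where self-adjointness of $\Pi_h$ does the bookkeeping.
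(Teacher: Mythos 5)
Your proof is correct, but it takes a more direct route than the paper. The paper derives the inequality from Corollary~\ref{cor:duality}: it inserts the conjugation bound for the $\phi$-part, exchanges $\inf_{u_h}$ and $\sup_{z_h}$, and then recognizes the remaining inner supremum $\sup_{u_h}\,(\Pi_h u_h,\diver z_h)-\int_\O\psi_h(x,\Pi_h u_h)\dv{x}$ as being dominated by $\int_\O\psi_h^*(x,\diver z_h)\dv{x}$ (using that $\Pi_h u_h$ ranges over a subset of $\cL^0(\cT_h)$ and that the conjugate can be computed elementwise). You instead prove the stronger pointwise statement $I_h(u_h)\ge D_h(z_h)$ for \emph{every} pair $(u_h,z_h)$ by applying Fenchel's inequality twice --- once for $(\phi,\phi^*)$ and once for $(\psi_h,\psi_h^*)$ --- and then cancelling the two cross terms via self-adjointness of $\Pi_h$ and the integration-by-parts formula~\eqref{eq:int_parts_rt_cr}, with the boundary term vanishing because of the complementary boundary conditions on $\SCR$ and $\SRT$. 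Your first Fenchel step is exactly the inequality~\eqref{eq:fenchel_by_parts} already used in Corollary~\ref{cor:duality}; the second Fenchel step replaces the paper's exchange-of-extrema and conjugate-identification argument. What your version buys is that it avoids the minimax manipulation entirely and makes explicit why the coupling terms cancel; what the paper's version buys is that it exhibits $D_h$ as arising canonically from $I_h$ by conjugation, which is the structural point the authors want to emphasize. The only minor point worth a remark in your write-up is that since $\psi_h$ and $\psi_h^*$ may take the value $+\infty$, the rearrangement $I_h(u_h)-D_h(z_h)\ge\dots$ should be read in the extended real sense; the conclusion $I_h(u_h)\ge D_h(z_h)$ is trivial whenever either side is infinite, so nothing is lost.
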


\begin{proof}
Using the result of Corollary~\ref{cor:duality} and exchanging
the order of the extrema we find that
\[\begin{split}
\inf_{u_h} I_h(u_h) 
&\ge \inf_{u_h} \sup_{z_h} -\int_\O \phi^*(\Pi_h z_h) \dv{x} 
 - (\Pi_h u_h,\diver z_h) + \int_\O \psi_h(x,\Pi_h u_h) \dv{x} \\
&\ge \sup_{z_h} -\int_\O \phi^*(\Pi_h z_h)\dv{x} 
+ \inf_{u_h} - (\Pi_h u_h,\diver z_h) + \int_\O \psi_h(x,\Pi_h u_h) \dv{x} \\
&= \sup_{z_h} -\int_\O \phi^*(\Pi_h z_h)\dv{x}
- \sup_{u_h} \, (\Pi_h u_h,\diver z_h) - \int_\O \psi_h(x,\Pi_h u_h) \dv{x} \\
&\ge \sup_{z_h} -\int_\O \phi^*(\Pi_h z_h)\dv{x}
 - \int_\O \psi_h^*(x,\Pi_h u_h) \dv{x} \\
&= \sup_{z_h} D_h(z_h).
\end{split}\]
This proves the asserted inequality. 
\end{proof}

The fourth implication concerns the postprocessing of solutions of the
primal problem to obtain a solution of the dual problem. This also
implies a strong discrete duality relation. 

\begin{corollary}[Strong discrete duality]
In addition to the conditions of Corollary~\ref{cor:discr_dual} 
assume that $\phi \in C^1(\R^d)$ and 
$\psi_h :\O\times \R^d \to \R$ is finite and differentiable with respect to 
the second argument. 
If $u_h\in \SCR$ is minimal for $I_h$ then the vector field
\[
z_h = \phi'(\nablah u_h) + \psi_h'(x,\Pi_h u_h) d^{-1} (1-\Pi_h) \id
\]
is maximal for $D_h$ with $I_h(u_h) = D_h(z_h)$. 
\end{corollary}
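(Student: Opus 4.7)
My plan is to derive the Euler--Lagrange optimality condition for $u_h$, then use both identities of Theorem~\ref{thm:ortho_rels} to verify that the candidate $z_h$ belongs to $\SRT$ with the expected projection and divergence, and finally to combine the $C^1$ Fenchel--Young equalities with the integration-by-parts formula~\eqref{eq:int_parts_rt_cr} to obtain $I_h(u_h)=D_h(z_h)$, which together with the weak duality of Corollary~\ref{cor:discr_dual} forces $z_h$ to maximize $D_h$.

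For the first step, the differentiability of $\phi$ and of $\psi_h(x,\cdot)$ together with the minimality of $u_h$ yields
\[
\int_\O \phi'(\nablah u_h)\cdot \nablah v_h \dv{x}+\int_\O \psi_h'(x,\Pi_h u_h)\,\Pi_h v_h \dv{x}=0
\]
for all $v_h\in \SCR$, where $\psi_h'$ denotes the partial derivative with respect to the second argument. Because $\psi_h$ is elementwise constant in the first argument and $\Pi_h u_h\in\SL0$, the field $b_h:=\psi_h'(\cdot,\Pi_h u_h)$ belongs to $\SL0$; similarly $a_h:=\phi'(\nablah u_h)\in\SL0^d$.

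The main step, where the orthogonality relations do the real work, is to prove that $z_h\in\SRT$ with $\Pi_h z_h=a_h$ and $\diver z_h=b_h$. Testing the Euler--Lagrange identity with $v_h\in \ker \Pi_h|_{\SCR}$ gives $(a_h,\nablah v_h)=0$, so by the first identity in Theorem~\ref{thm:ortho_rels} there exists $\tilde z_h\in\SRT$ with $\Pi_h\tilde z_h=a_h$. Substituting $a_h=\Pi_h\tilde z_h$ back into the Euler--Lagrange equation, moving $\Pi_h$ off by self-adjointness (using that $\nablah v_h$ is already piecewise constant) and integrating by parts via~\eqref{eq:int_parts_rt_cr} produces $(b_h-\diver\tilde z_h,\Pi_h v_h)=0$ for all $v_h\in\SCR$, i.e.\ $b_h-\diver\tilde z_h\in(\Pi_h\SCR)^\perp$. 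The second identity in Theorem~\ref{thm:ortho_rels} then supplies $w_h\in \ker\Pi_h|_{\SRT}$ with $\diver w_h=b_h-\diver\tilde z_h$; such a $w_h$ is uniquely determined elementwise by $w_h|_T=d^{-1}(b_h-\diver\tilde z_h)|_T\,(x-x_T)$ since $x_T$ is the barycenter of $T$. Writing $\tilde z_h|_T=a_h|_T+d^{-1}(\diver\tilde z_h)|_T(x-x_T)$ and adding, the $(\diver\tilde z_h)|_T$ contributions cancel and $\tilde z_h+w_h$ reduces exactly to the formula in the statement; consequently $z_h=\tilde z_h+w_h\in\SRT$ with the asserted projection and divergence.

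For the final step, since $\phi$ and $\psi_h(x,\cdot)$ are $C^1$, Fenchel--Young holds as an equality at these conjugate pairs, giving pointwise
\[
\phi(\nablah u_h)+\phi^*(\Pi_h z_h)=\nablah u_h\cdot\Pi_h z_h,\qquad \psi_h(x,\Pi_h u_h)+\psi_h^*(x,\diver z_h)=\Pi_h u_h\,\diver z_h.
\]
Integrating, and using $(\nablah u_h,\Pi_h z_h)=(\nablah u_h,z_h)=-(\Pi_h u_h,\diver z_h)$ by self-adjointness of $\Pi_h$ and~\eqref{eq:int_parts_rt_cr}, the two pairings cancel upon summing and we obtain $I_h(u_h)+\int_\O\phi^*(\Pi_h z_h)+\psi_h^*(x,\diver z_h)\dv{x}=0$, i.e.\ $I_h(u_h)=D_h(z_h)$. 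The weak duality of Corollary~\ref{cor:discr_dual} then forces $z_h$ to be a maximizer of $D_h$. I expect the middle paragraph to be the real obstacle: the elementwise formula for $z_h$ by itself gives no hint that the resulting field has globally continuous normal components vanishing on $\GN$, and it is precisely the combined use of both orthogonality identities that upgrades the pointwise construction to a globally admissible Raviart--Thomas field.
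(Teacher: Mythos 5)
Your proposal is correct and follows essentially the same route as the paper: derive the Euler--Lagrange condition, use the first identity of Theorem~\ref{thm:ortho_rels} to produce $z_h\in\SRT$ with $\Pi_h z_h=\phi'(\nablah u_h)$, identify the divergence from the optimality condition, and conclude via the Fenchel--Young equalities and weak duality. Your middle step is in fact slightly more careful than the paper's: the optimality condition only yields $\psi_h'(\cdot,\Pi_h u_h)-\diver \tilde z_h\in(\Pi_h\SCR)^\perp$, which forces equality directly when $\GD\neq\p\O$ (by Corollary~\ref{cor:surject_proj_cr}), whereas your explicit correction by $w_h\in\ker\Pi_h|_{\SRT}$ via the second orthogonality identity covers the case $\GD=\p\O$ as well, where the paper's assertion $\diver z_h=\psi_h'(\cdot,\Pi_h u_h)$ is stated without this adjustment.
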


\begin{proof}
The optimal $u_h\in \SCR$ solves the optimality condition
\begin{equation}\label{eq:opt_min}
\int_\O \phi'(\nablah u_h) \cdot \nablah v_h + \psi_h'(\cdot,\Pi_h u_h) \Pi_h v_h \dv{x}
= 0
\end{equation}
for all $v_h \in \SCR$. By restricting to functions satisfying
$\Pi_h v_h =0$ we deduce with Theorem~\ref{thm:ortho_rels}
that there exists $z_h \in \SRT$ with 
\[
\Pi_h z_h = \phi'(\nablah u_h).
\]
The optimality condition~\eqref{eq:opt_min} implies that 
$\diver z_h = \psi_h' (\cdot,\Pi_h u_h)$. Hence, $z_h$ satisfies 
the asserted identity. With the resulting Fenchel identities 
\[\begin{split}
\nablah u_h \cdot \Pi_h z_h &= \phi(\nablah u_h) + \phi^*(\Pi_h z_h),\\
\Pi_h u_h \cdot \diver z_h &= \psi_h(\cdot, \Pi_h u_h) + \psi_h^*(\cdot,\diver z_h),
\end{split}\]
and by choosing $v_h = u_h$ in~\eqref{eq:opt_min} we find that
\[
I_h(u_h) = D_h(z_h)
\]
which in view of the weak duality relation 
$\inf_{u_h} I_h(u_h) \ge \sup_{z_h} D_h(z_h)$ implies that $z_h$ is optimal.
\end{proof}

%\cleardoublepage
\appendix
\section{Discrete Poincar\'e lemma}\label{sec:app_poincare}
For completeness we provide a derivation of~\eqref{eq:rt_cr_ortho_a} 
based on a discrete Poincar\'e lemma. We say that $\GD$ is connnected
if its relative interior has at most one connectivity component. 

\begin{proposition}[Discrete Poincar\'e lemma]\label{prop:cr_grads}
Assume that $\GD=\emptyset$ or $d=2$ and $\GD$ is connected. 
A vector field $w_h\in \SL0^d$ satisfies $w_h = \nablah v_h$
for a function $v_h \in \SCR$ if and only if  
\[ 
\int_\O w_h \cdot y_h \dv{x} = 0
\]
for all $y_h \in \SRT$ with $\diver y_h = 0$. 
\end{proposition}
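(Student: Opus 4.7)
The proof splits into two directions. For the \emph{only if} direction, assume $w_h = \nablah v_h$ with $v_h \in \SCR$ and let $y_h \in \SRT$ satisfy $\diver y_h = 0$. The integration-by-parts formula~\eqref{eq:int_parts_rt_cr} applied to $v_h$ and $y_h$ yields
\[
\int_\O w_h \cdot y_h \dv{x} = \int_\O \nablah v_h \cdot y_h \dv{x} = -\int_\O v_h \diver y_h \dv{x} + \int_{\p\O} v_h\, y_h \cdot n \dv{s}.
\]
The first term vanishes by assumption on $y_h$. The boundary integral splits into a $\GN$ contribution, which vanishes because $y_h \cdot n = 0$ there, and a $\GD$ contribution, which vanishes because on each $S \subset \GD$ the trace $v_h|_S$ is affine with $v_h(x_S) = 0$, so it integrates to zero against the constant normal trace $y_h \cdot n$.

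For the \emph{if} direction, the plan is to construct $v_h$ by propagating midpoint values through a spanning tree of the dual graph of $\cT_h$. Fix a reference element $T_0$, choose a reference side $S_0 \subset \p T_0$ (with $S_0 \subset \GD$ when $\GD \neq \emptyset$), and set $v_h(x_{S_0}) = 0$. The gradient condition $\nabla v_h|_{T_0} = w_h|_{T_0}$ then determines the remaining $d$ midpoint values on $T_0$ uniquely: on any simplex the linear map from the $d+1$ midpoint values of an affine function to its gradient is surjective with one-dimensional kernel (the constants), so fixing one value together with the gradient pins down the others. Traversing the spanning tree, at each new element $T'$ sharing a side $S$ with an already processed neighbor, the value $v_h(x_S)$ is inherited and the gradient condition $\nabla v_h|_{T'} = w_h|_{T'}$ uniquely determines the remaining $d$ midpoint values.

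The principal obstacle is well-definedness of $v_h$, equivalently the consistency of its midpoint values around every closed loop in the dual graph and every independent boundary loop. To each such cycle one associates a vector field $y_h \in \SRT$ built as a signed combination of the basis fields $\psi_S$ over the sides traversed; the coefficients can be arranged so that $\diver y_h = 0$, since the cycle closes and the elementwise normal fluxes balance. The cumulative discrepancy of $v_h$ around the cycle then equals, up to sign, $(w_h, y_h)$, which vanishes by hypothesis. The topological condition controls the supply of these cycles: when $\GD = \emptyset$, all boundary loops lie within $\GN = \p\O$ and are handled automatically because $\SRT$ enforces $y_h \cdot n = 0$ on $\GN$; when $d = 2$ and $\GD$ is connected, the $\GD$-loops collapse to a single reference arc that is absorbed by the choice $v_h(x_{S_0}) = 0$. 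Consistency then propagates the Dirichlet condition $v_h(x_S) = 0$ to every $S \subset \GD$, so $v_h \in \SCR$ and $\nablah v_h = w_h$, completing the argument.
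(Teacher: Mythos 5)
Your overall strategy coincides with the paper's: the forward direction via the integration-by-parts formula (your justification of the vanishing boundary term is correct), and the converse via propagating midpoint values through the dual graph and reducing well-definedness to orthogonality against divergence-free Raviart--Thomas fields associated with cycles. Two points, however, are asserted where the actual work lies. First, the claim that ``the cumulative discrepancy of $v_h$ around the cycle equals, up to sign, $(w_h,y_h)$'' is the crux of the argument and is not verified. The paper takes $y_h=\sum_j \frac{(d-1)!}{|S_j|}\psi_{S_j}$ and computes, using $z_{S,T}-x_T=d(x_T-x_S)$, that $\int_{T_j}w_h\cdot y_h\dv{x}=w_h|_{T_j}\cdot(x_{S_j}-x_{S_{j-1}})$ and that the two divergence contributions on each $T_j$ cancel; without this computation (or an equivalent one) the reduction of cycle consistency to the hypothesis is unproven. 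This is a gap in rigor rather than in the idea.

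The more serious gap is your treatment of the Dirichlet condition. Cycle consistency in the dual graph only involves interior sides; it makes $v_h$ well defined as an element of $\SCRF$ with $\nablah v_h=w_h$ and fixes $v_h(x_{S_0})=0$ for a \emph{single} side $S_0\subset\GD$. It does not say anything about the values $v_h(x_S)$ for the other sides $S\subset\GD$, so your sentence ``consistency then propagates the Dirichlet condition to every $S\subset\GD$'' has no supporting mechanism, and your proposal never explains where the hypotheses $d=2$ and $\GD$ connected are actually used. The paper closes this gap with a separate argument: for each interior node $z$ of $\GD$ with adjacent boundary sides $S_1,S_2\subset\GD$ it tests $w_h=\nablah v_h$ against the divergence-free field $y_h=(\nabla\vphi_z)^\perp\in\SRT$, whose normal trace on $\p\O$ is the tangential derivative of $\vphi_z$; the orthogonality hypothesis then yields $v_h(x_{S_1})=v_h(x_{S_2})$, and connectivity of $\GD$ gives that $v_h$ is constant (hence zero) on $\GD$. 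This is precisely where $d=2$ enters, since the rotation $(\cdot)^\perp$ turning a gradient into a divergence-free field is a two-dimensional device. Without some construction of this kind (e.g.\ divergence-free fields in $\SRT$ associated with chains joining two Dirichlet sides), your argument only produces $v_h\in\SCRF$ vanishing at one Dirichlet midpoint, not $v_h\in\SCR$.
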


\begin{proof}
If $w_h = \nablah v_h$ then the orthogonality relation follows from the 
inte\-gration-by-parts formula~\eqref{eq:int_parts_rt_cr}.
Conversely, if $w_h$ is orthogonal to 
vector fields $y_h \in \SRT$ with vanishing divergence then we can
construct a function $v_h$ by integrating $w_h$ along a path connecting
midpoints of sides, i.e., choosing a side $S_0$ at which some value is assigned
to $v_h$, e.g., $v_h(x_{S_0})=0$. If $\GD\neq \emptyset$ then we choose
$S_0\subset \GD$. The values at other sides are obtained via 
\[
v_h(x_S) = v_h(x_{S_0}) + \sum_{j=1}^J w_h|_{T_j} \cdot (x_{S_j}- x_{S_{j-1}}),
\]
where $(T_j)_{j=1,\dots,J}$ is a chain of (unique) elements connecting $S_0 \subset T_1$ 
with $S=S_J\subset T_J$ via the shared sides $S_1,\dots,S_{J-1}$, i.e., 
$T_j \cap T_{j+1} = S_j$ for $j=1,\dots,J-1$. To see that this is well defined it suffices
to show that for every closed path with $S_J=S_0$ the sum equals zero. To verify
this we define the Raviart--Thomas vector field 
\[
y_h = \sum_{j=1}^J \frac{(d-1)!}{|S_j|} \psi_{S_j},
\]
where we assume that the plus sign in 
\[
\psi_S(x) = \pm \frac{|S|}{d! |T_\pm|} (z_{S,T_\pm} - x)
\]
occurs for $\psi_{S_j}$ occurs on $T_{j+1}$
and the minus sign on $T_j$. For the element $T_j$ we then have that 
\[\begin{split}
\int_{T_j} w_h \cdot y_h \dv{x} 
&= w_h|_{T_j} \cdot (d-1)! \int_{T_j}  \big( |S_{j-1}|^{-1} \psi_{S_{j-1}} + |S_j|^{-1} \psi_{S_j}\big) \dv{x} \\
&= w_h|_{T_j} \cdot (d-1)! |T_j| \big(|S_{j-1}|^{-1} \psi_{S_{j-1}}(x_{T_j}) + |S_j|^{-1} \psi_{S_j}(x_{T_j})\big) \\
&= w_h|_{T_j} \cdot  d^{-1} \big((z_{S_{j-1}}) - x_{T_j}) - ( z_{S_j} - x_{T_j})\big) \\
&= w_h|_{T_j} \cdot (x_{S_j} - x_{S_{j-1}}),
\end{split}\]
where we used that $(z_{S,T} -x_T) = d (x_T-x_S)$ for $S\subset \p T$. Moreover, 
we have
\[
\diver y_h|_{T_j} 
= \diver \big((d-1)! |S_{j-1}|^{-1} \psi_{S_{j-1}} + (d-1)! |S_j|^{-1} \psi_{S_j} \big) =  0.
\]
This implies that $\diver y_h=0$ and hence by the assumed orthogonality
\[
\sum_{j=1}^J w_h|_{T_j} \cdot (x_{S_j}- x_{S_{j-1}}) 
= \sum_{j=1}^J \int_{T_j} w_h \cdot y_h \dv{x}  
= \int_\O w_h \cdot y_h \dv{x}
=0
\]
for every closed path of elements. Hence, the function $v_h$ is well defined
with $\nabla_h v_h = w_h$. 
If $d=2$ and $\GD$ is connected then by letting $\vphi_z \in C(\overline{\O})$ 
be an elementwise affine nodal basis function associated with an inner node
$z\in \cN_h\cap \GD$, i.e., $z=S_1\cap S_2$ for $S_1,S_2\in \cS_h \cap \GD$,
and choosing  $y_h = (\nabla \vphi_z)^\perp \in \SRT$, where 
$(a_1,a_2)^\perp = (-a_2,a_1)$, it follows that 
\[\begin{split}
0 &= \int_\O \nablah v_h \cdot (\nabla \vphi_z)^\perp \dv{x}
= \int_{S_1 \cup S_2} v_h (\nabla \vphi_z)^\perp \cdot n \dv{s} \\
&= \pm \big( v_h(x_{S_2}) - v_h(x_{S_1})\big),
\end{split}\]
i.e., that 
$v_h$ is constant on $\GD$. Here we used that $(\nabla \vphi_z)^\perp \cdot n$ is 
the tangential derivative on $\GD$ given by $\pm 1/|S_j|$ for $j=1,2$. 
\end{proof}

To deduce~\eqref{eq:rt_cr_ortho_a} from the proposition we argue as
in~\cite{ChaPoc19-pre} and 
let $w_h\in \SL0^d$ be orthogonal to $\Pi_h \SRT$ and hence also to
$\SRT$. Proposition~\ref{prop:cr_grads} implies that $w_h = \nabla_h v_h$ and
it remains to show that $v_h$ has the same value at all element midpoints. 
This follows from  
\[
0 = \int_\O \nablah v_h \cdot \psi_S \dv{x} = -\int_\O v_h \diver \psi_S \dv{x}
= \frac{|S|}{(d-1)!} \big(v_h(x_{T_+}) - v_h(x_{T_-})\big).
\]
Hence, $w_h\in \nablah \big(\ker \Pi_h|_{\SCR}\big)$. 
Note that a nontrivial $v_h$ only exists on triangulations that can be
partitioned by two colors, e.g., consisting of halved squares with the
same diagonal. 
Conversely, if $v_h\in \SCR$ with $\Pi_h v_h =0$ then 
the integration-by-parts formula~\eqref{eq:int_parts_rt_cr}
yields that $\nablah v_h$ is orthogonal to $\SRT$ and in
particular to $\Pi_h \SRT$. 

% \cleardoublepage
\subsection*{Acknowledgments.} The authors are grateful to Antonin Chambolle
for stimulating discussions and valuable hints. 

\section*{References}
\printbibliography[heading=none]

\end{document}